\newcommand{\ol}{\overline}
\newcommand{\ul}{\underline}
\renewcommand{\phi}{\varphi}
\newcommand{\llb}{\left\lbrace}
\newcommand{\rrb}{\right\rbrace}
\newcommand{\llv}{\left\lvert}
\newcommand{\rrv}{\right\rvert}
\newcommand{\1}{\mathbbm{1}}
\newcommand\fakeqed{\pushQED{\qed}\qedhere}
\DeclareMathOperator{\Tor}{Tor}
\DeclareMathOperator{\Ext}{Ext}
\DeclareMathOperator{\Hom}{Hom}
\newtheorem{thm}{Theorem}[section]
\newtheorem{abcthm}{Theorem}
\theoremstyle{plain}
\newtheorem{prop}[thm]{Proposition}
\newtheorem{lem}[thm]{Lemma}
\newtheorem{cor}[thm]{Corollary}
\newtheorem*{theorem*}{Theorem}
\theoremstyle{definition}
\newtheorem{defn}[thm]{Definition}
\theoremstyle{remark}
\newtheorem{rem}[thm]{Remark}
\newtheorem{eg}[thm]{Example}
\numberwithin{thm}{subsection}
\title{Cohomology of Tanabe algebras}
\author{Andrew Fisher}
\address{(Andrew Fisher) School of Mathematical and Physical Sciences, University of Sheffield, Hounsfield Road, S3 7RH, UK}
\email{afisher1@sheffield.ac.uk}
\author{Daniel Graves}
\address{(Daniel Graves) Lifelong Learning Centre, University of Leeds, Woodhouse, Leeds, LS2 9JT, UK}
\email{dan.graves92@gmail.com}
\date{}
\begin{document}

\keywords{diagram algebras, cohomology of algebras, partition algebras, Tanabe algebras}
\subjclass{16E40, 20J06, 16E30}


\begin{abstract}
In this paper we study the (co)homology of Tanabe algebras, which are a family of subalgebras of the partition algebras exhibiting a Schur--Weyl duality with certain complex reflection groups. The homology of the partition algebras has been shown to be related to the homology of the symmetric groups by Boyd--Hepworth--Patzt and the results they obtain depend on a parameter. In all known results, the homology of a diagram algebra is dependent on one of two things: the invertibility of a parameter in the ground ring or the parity of the positive integer indexing the number of pairs of vertices. We show that the (co)homology of Tanabe algebras is isomorphic to the (co)homology of the symmetric groups and that this is independent of both the parameter and the parity of the index. To the best of our knowledge, this is the first example of a result of this sort. Along the way we will also study the (co)homology of uniform block permutation algebras and totally propagating partition algebras as well collecting cohomological analogues of known results for the homology of partition algebras and Jones annular algebras.
\end{abstract}

\maketitle

\section{Introduction}

The homology of diagram algebras is an emerging area of study. Diagram algebras are examples of augmented algebras and their homology and cohomology can be defined in terms of certain $\Tor$ and $\Ext$ groups following \cite[Definition 2.4.4]{Benson1}. Examples of algebras that have been studied include the Temperley--Lieb algebras \cite{BH1,Sroka}, the Brauer algebras \cite{BHP}, the partition algebras \cite{BHP2, Boyde2}, the Jones annular algebras \cite{Boyde2}, the rook algebras and the rook-Brauer algebras \cite{Boyde}.

The partition algebras, $P_n(\delta)$ (where $n$ is a positive integer and $\delta$ is a parameter in a unital, commutative ground ring), were introduced independently by Martin \cite{Martin-partition} and Jones \cite{Jones-partition} to study the Potts model. Jones showed that the partition algebras exhibit a \emph{Schur--Weyl duality} with the symmetric groups, so called because it takes a similar form to the classical statement of Schur--Weyl duality between the symmetric groups and the general linear groups. We note below that other diagram algebras exhibit similar Schur--Weyl dualities. 

Loosely speaking, a partition $n$-diagram is an undirected graph on two columns of vertices, where the connected components determine and are determined by a partition of the vertices. The algebra $P_n(\delta)$ is spanned linearly by such diagrams with a product given by composing partitions (recalled in Section \ref{alg-sec} below).

 The homology of partition algebras has been studied by Boyd, Hepworth and Patzt \cite{BHP2}. Their results split into two cases, which depend on the parameter $\delta$. They show that if the parameter $\delta$ is invertible in the ground ring, then the homology of the partition algebras is globally isomorphic to the homology of the symmetric groups. However, if $\delta$ is not invertible then the homology of the partition algebras is only known to be isomorphic to the homology of the symmetric groups in a range. This is a recurring phenomenon in this very young field. The homology of the Temperley--Lieb algebras \cite{BH1} and the homology of the Brauer algebras \cite{BHP} both exhibit similar parameter-dependent behaviour. The homology of some diagram algebras also exhibit different behaviours depending on the parity of the index $n$, which counts the number of pairs of vertices in a basis element. Such results exist for the homology of Temperley--Lieb algebras (see \cite[Theorem A]{Sroka}) and the Brauer algebras (see \cite[Theorem 1.3]{Boyde}).

We will study the homology and cohomology of three subalgebras of the partition algebra: the \emph{Tanabe algebras}, the \emph{uniform block partition algebras} and the \emph{totally propagating partition algebras}.

Tanabe \cite{Tanabe} introduced a family of subalgebras, $\mathcal{T}_n(\delta,r)$, of the partition algebra. For each positive integer $r$, $\mathcal{T}_n(\delta,r)$ is spanned by partition $n$-diagrams where, for each connected component, the difference between the number of vertices in each column is zero modulo $r$. Tanabe demonstrated that these subalgebras exhibit a Schur--Weyl duality with certain complex reflection groups. Our treatment of the Tanabe algebras will follow that of Orellana \cite{Orellana}. 

We note that for any $r>n$, this condition is the same. It dictates that every connected component must have the same number of vertices in each column. In this way we obtain the uniform block permutation algebra, $U_n$, originally studied by Kosuda under the name \emph{party algebra} \cite{Kosuda1,Kosuda3}. The name uniform block permutations was coined by FitzGerald \cite{FitzGerald}.

The totally propagating partition algebras, $TPP_n$, are spanned linearly by partition $n$-diagrams such that every connected component contains vertices from each column in the graph. Kudryavtseva and Mazorchuk \cite{KM-TPP} have shown that the totally propagating partition algebras exhibit a Schur--Weyl duality with the rook algebras (see \cite{HalvR} for further details of the rook algebra).

Our main result is as follows.

\begin{abcthm}
\label{Tanabe-thm}
Let $\delta \in k$. Let $r\geqslant 2$. There exist isomorphisms of graded $k$-modules
\begin{enumerate}
\item\label{Tanabe-thm-1} $\Tor_{\star}^{\mathcal{T}_n(\delta , r)}\left(\1 , \1\right) \cong H_{\star}(\Sigma_n , \1)$ and $\Ext_{\mathcal{T}_n(\delta , r)}^{\star}\left(\1 , \1\right) \cong H^{\star}(\Sigma_n , \1)$;
\item $\Tor_{\star}^{U_n}\left(\1 , \1\right) \cong H_{\star}(\Sigma_n , \1)$ and $\Ext_{U_n}^{\star}\left(\1 , \1\right) \cong H^{\star}(\Sigma_n , \1)$ and
\item $\Tor_{\star}^{TPP_n}\left(\1 , \1\right) \cong H_{\star}(\Sigma_n , \1)$ and $\Ext_{TPP_n}^{\star}\left(\1 , \1\right) \cong H^{\star}(\Sigma_n , \1)$.
\end{enumerate}
\end{abcthm}

In particular, we obtain isomorphisms which are \emph{independent} of both the parameter $\delta$ and the index $n$, showing that the (co)homology of these subalgebras behaves differently to the (co)homology of the partition algebra. 

Theorem \ref{Tanabe-thm} follows from a technical lemma about subalgebras of the partition algebra (Lemma \ref{idempotent-cover-A-lem}) and the following theorem (of which the homological part is \cite[Theorem 1.7]{Boyde2}, whilst the cohomological part follows from Proposition \ref{partition-technical-prop} below).

\begin{abcthm}
\label{theorem-B}
Let $A$ be an augmented $k$-algebra with trivial module $\mathbbm{1}$. Let $I$ be a two-sided ideal of $A$ which is free as a $k$-module and which acts as multiplication by $0\in k$ on $\mathbbm{1}$. Suppose that there exists a $k$-free idempotent left cover of $I$ of height $h$ and width $w$. There are natural isomorphisms of $k$-modules
\[\Tor_q^A(\mathbbm{1},\mathbbm{1}) \cong \Tor_q^{A/I}(\mathbbm{1},\mathbbm{1}) \quad \text{and} \quad \Ext_A^q(\mathbbm{1},\mathbbm{1}) \cong \Ext_{A/I}^q(\mathbbm{1},\mathbbm{1})\]
for $q\leqslant h$. Furthermore, the natural maps  \[\Tor_{h+1}^A(\1,\1)\rightarrow \Tor_{h+1}^{A/I}(\1,\1) \quad \text{and} \quad \Ext_{A/I}^{h+1}(\1,\1) \rightarrow \Ext_{A}^{h+1}(\1,\1)\] are a surjection and an injection respectively.

Finally, if $h=w$, then we have natural isomorphisms of graded $k$-modules
\[\Tor_{\star}^A(\mathbbm{1},\mathbbm{1}) \cong \Tor_{\star}^{A/I}(\mathbbm{1},\mathbbm{1}) \quad \text{and} \quad \Ext_A^{\star}(\mathbbm{1},\mathbbm{1}) \cong \Ext_{A/I}^{\star}(\mathbbm{1},\mathbbm{1}).\]
\end{abcthm}

We will use techniques which were developed in \cite{Boyde2} to study the homology of partition algebras and Jones annular algebras. These techniques were themselves inspired by work of Sroka \cite{Sroka}. Let $A$ be a Tanabe algebra, a uniform block permutation algebra or a totally propagating partition algebra. Let $I$ be the two-sided ideal spanned $k$-linearly by non-permutation diagrams and observe that in each case there is an isomorphism of $k$-algebras $A/I\cong k[\Sigma_n]$. We will define a family of left ideals $L_{i,j}$ which cover the ideal $I$ and such that any intersection of these ideals is either zero or is principal and generated by an idempotent. Such a family of ideals is called a \emph{$k$-free idempotent left cover with height equal to its width}, in the terminology of \cite{Boyde2}. With this in place, Theorem \ref{Tanabe-thm} follows directly from Theorem \ref{theorem-B}. The key step is that a $k$-free idempotent left cover allows us to define a chain complex called the \emph{Mayer--Vietoris complex}. In the case of the partition algebras, Boyde shows that this is a partial resolution of $k[\Sigma_n]$ by $P_n(\delta)$-modules to obtain isomorphisms between the homology of the partition algebras and the homology of the symmetric groups in a range. In our case, we can use the Mayer--Vietoris complex to construct a genuine projective resolution of $k[\Sigma_n]$ by $A$-modules.

These techniques will also allow us to deduce cohomological versions of Boyde's results for the Jones annular algebras and the partition algebras. In particular, we deduce a cohomological stability result for the partition algebras after the fashion of \cite[Corollary C]{BHP2}.

The paper is structured as follows. In Section \ref{alg-sec}, we recall the definitions of partition algebras, Tanabe algebras, uniform block permutation algebras and totally propagating partition algebras. In Section \ref{Theorem-B-section} we will recall Boyde's notion of $k$-free idempotent left cover and the Mayer--Vietoris complex. We use these to prove Theorem \ref{theorem-B}. In Section \ref{Cohom-partition-sec} we use Theorem \ref{theorem-B} to prove the cohomological analogues of Boyde's results for the partition algebras and Jones annular algebras. Finally, in Section \ref{Tanabe-sec} we prove Theorem \ref{Tanabe-thm}.

\subsection*{Acknowledgements}
We would like to thank James Brotherston and Natasha Cowley for helpful and interesting conversations whilst writing this paper. We would like to thank James Cranch and Sarah Whitehouse for the their feedback and support in this project and related work. We would like to thank Rachael Boyd and Richard Hepworth for interesting conversations at the 2024 British Topology Meeting in Aberdeen. We would like to thank Guy Boyde for his comments and feedback on a previous draft of this paper. We thank the anonymous referee for their helpful and constructive comments.

\subsection*{Conventions}
Throughout, unless otherwise stated, $k$ will be a unital, commutative ring and $n$ will be a positive integer. We will write $\ul{n}$ for the set $\llb 1 ,\dotsc , n\rrb$.

\section{An aggregation of algebras}
\label{alg-sec}
In this section we collect the definitions of the algebras that we will consider in this paper, namely the partition algebras, the Tanabe algebras, the uniform block permutation algebras and the totally propagating partition algebras.

\subsection{Partition algebras}

\begin{defn}
A \emph{partition $n$-diagram} is an undirected graph on two columns of $n$ vertices where each edge is incident to two distinct vertices and there is at most one edge between any two vertices. By convention, the vertices down the left-hand column will be labelled by $1,\dots , n$ in ascending order from top to bottom and the vertices down the right-hand column will be labelled by $\ol{1},\dotsc , \ol{n}$ in ascending order from top to bottom.
\end{defn}

These diagrams are called partition $n$-diagrams because the connected components of the graph determine and are determined by a partition of the set $\llb 1, \ol{1}, \dotsc , n , \ol{n}\rrb$. We say two partition $n$-diagrams are \emph{equivalent} if they determine and are determined by the same partition of the set $\llb 1, \ol{1}, \dotsc , n , \ol{n}\rrb$. Henceforth, when referring to a diagram, we will mean its equivalence class.

\begin{defn}
\label{terminology-defn}
We collect some important terminology for graphs that will be used throughout the rest of the paper.
\begin{enumerate}
    \item An edge that connects the left-hand column of vertices to the right-hand column of vertices will be called a \emph{propagating edge}. 
    \item A connected component which contains vertices in both columns (that is, a connected component containing a propagating edge) will be called a \emph{propagating component}.
    \item An edge that connects two vertices in the same column will be called a \emph{non-propagating edge}. 
    \item A vertex not connected to any other by an edge will be called an \emph{isolated vertex}.
    \item We will refer to the number of vertices in a connected component of a partition $n$-diagram as the \emph{cardinality} of the connected component.
    \item Any diagram having precisely $n$ propagating components will be called a \emph{permutation diagram}. All other diagrams will be referred to as \emph{non-permutation diagrams}.
\end{enumerate}
\end{defn}

\begin{defn}
Let $\delta \in k$. The \emph{partition algebra}, $P_n(\delta)$, is the $k$-algebra with basis consisting of all partition $n$-diagrams with the multiplication defined by the $k$-linear extension of the following product of diagrams. Let $d_1$ and $d_2$ be partition $n$-diagrams. The product $d_1d_2$ is obtained by the following procedure:
\begin{itemize}
    \item Place the diagram $d_2$ to the right of the diagram $d_1$ and identify the vertices $\ol{1},\dotsc , \ol{n}$ in $d_1$ with the vertices $1,\dotsc , n$ in $d_2$. Call this diagram with three columns of vertices $d_1\ast d_2$. We drop the labels of the vertices in the middle column and we preserve the labels of the left-hand column and right-hand column.
    \item Count the number of connected components that lie entirely within the middle column of the new diagram $d_1\ast d_2$. Call this number $\alpha$.
    \item Make a new partition $n$-diagram, $d_3$, as follows. Given distinct vertices $x$ and $y$ in the set $\llb 1, \ol{1}, \dotsc , n , \ol{n}\rrb$, $d_3$ has an edge between $x$ and $y$ if there is a path from $x$ to $y$ in $d_1\ast d_2$.
    \item We define the composite $d_1d_2=\delta^{\alpha}d_3$.
\end{itemize}
We note that this product is associative and well-defined up to equivalence of partition $n$-diagrams (see \cite[Proposition 1]{Martin-partition} for instance). The identity element consists of the diagram with $n$ horizontal edges.
\end{defn}

\begin{eg}
Here is an example of the composition defined above.
Suppose we have diagrams
\begin{center}
\begin{tikzpicture}
 \tikzset{>=stealth}
\draw (0,1.5) node {$d_1$};
\draw (0.5,1.5) node {$=$};
\fill (1,0) circle[radius=2pt];
\fill (1,1) circle[radius=2pt];
\fill (1,2) circle[radius=2pt];
\fill (1,3) circle[radius=2pt];
\fill (2,0) circle[radius=2pt];
\fill (2,1) circle[radius=2pt];
\fill (2,2) circle[radius=2pt];
\fill (2,3) circle[radius=2pt];
\draw (1,0) node[left] {\footnotesize $4$};
\draw (1,1) node[left] {\footnotesize $3$};
\draw (1,2) node[left] {\footnotesize $2$};
\draw (1,3) node[left] {\footnotesize $1$};
\draw (2,0) node[right] {\footnotesize $\ol{4}$};
\draw (2,1) node[right] {\footnotesize $\ol{3}$};
\draw (2,2) node[right] {\footnotesize $\ol{2}$};
\draw (2,3) node[right] {\footnotesize $\ol{1}$};
\draw (1,3) -- (2,2) -- (1,1);
\path[-] (2,0) edge [bend left=20] (2,1);   

\draw (4,1.5) node {$d_2$};
\draw (4.5,1.5) node {$=$};
\fill (5,0) circle[radius=2pt];
\fill (5,1) circle[radius=2pt];
\fill (5,2) circle[radius=2pt];
\fill (5,3) circle[radius=2pt];
\fill (6,0) circle[radius=2pt];
\fill (6,1) circle[radius=2pt];
\fill (6,2) circle[radius=2pt];
\fill (6,3) circle[radius=2pt];
\draw (5,0) node[left] {\footnotesize $4$};
\draw (5,1) node[left] {\footnotesize $3$};
\draw (5,2) node[left] {\footnotesize $2$};
\draw (5,3) node[left] {\footnotesize $1$};
\draw (6,0) node[right] {\footnotesize $\ol{4}$};
\draw (6,1) node[right] {\footnotesize $\ol{3}$};
\draw (6,2) node[right] {\footnotesize $\ol{2}$};
\draw (6,3) node[right] {\footnotesize $\ol{1}$};
\draw (5,2) -- (6,1);
\path[-] (6,3) edge [bend right=20] (6,2);
\path[-] (5,1) edge [bend left=20] (5,0);
\end{tikzpicture}
\end{center}
in $P_4(\delta)$. In this case we have 
\begin{center}
\begin{tikzpicture}
\draw (0,1.5) node {$d_1\ast d_2$};
\draw (0.9,1.5) node {$=$};
\fill (1.5,0) circle[radius=2pt];
\fill (1.5,1) circle[radius=2pt];
\fill (1.5,2) circle[radius=2pt];
\fill (1.5,3) circle[radius=2pt];
\fill (2.5,0) circle[radius=2pt];
\fill (2.5,1) circle[radius=2pt];
\fill (2.5,2) circle[radius=2pt];
\fill (2.5,3) circle[radius=2pt];
\fill (3.5,0) circle[radius=2pt];
\fill (3.5,1) circle[radius=2pt];
\fill (3.5,2) circle[radius=2pt];
\fill (3.5,3) circle[radius=2pt];
\draw (1.5,0) node[left] {\footnotesize $4$};
\draw (1.5,1) node[left] {\footnotesize $3$};
\draw (1.5,2) node[left] {\footnotesize $2$};
\draw (1.5,3) node[left] {\footnotesize $1$};
\draw (3.5,0) node[right] {\footnotesize $\ol{4}$};
\draw (3.5,1) node[right] {\footnotesize $\ol{3}$};
\draw (3.5,2) node[right] {\footnotesize $\ol{2}$};
\draw (3.5,3) node[right] {\footnotesize $\ol{1}$};
\draw (1.5,3) -- (2.5,2) -- (3.5,1);
\draw (2.5,2) -- (1.5,1);
\path[-] (2.5,1) edge [bend right=20] (2.5,0);
\path[-] (2.5,1) edge [bend left=20] (2.5,0);
\path[-] (3.5,3) edge [bend right=20] (3.5,2);
\draw (4.5,1.5) node {\text{and}};
\draw (5.5,1.5) node {$d_1d_2$};
\draw (6.2,1.5) node {$=$};
\draw (6.6,1.5) node {$\delta^2$};
\fill (7.5,0) circle[radius=2pt];
\fill (7.5,1) circle[radius=2pt];
\fill (7.5,2) circle[radius=2pt];
\fill (7.5,3) circle[radius=2pt];
\fill (8.5,0) circle[radius=2pt];
\fill (8.5,1) circle[radius=2pt];
\fill (8.5,2) circle[radius=2pt];
\fill (8.5,3) circle[radius=2pt];
\draw (7.5,0) node[left] {\footnotesize $4$};
\draw (7.5,1) node[left] {\footnotesize $3$};
\draw (7.5,2) node[left] {\footnotesize $2$};
\draw (7.5,3) node[left] {\footnotesize $1$};
\draw (8.5,0) node[right] {\footnotesize $\ol{4}$};
\draw (8.5,1) node[right] {\footnotesize $\ol{3}$};
\draw (8.5,2) node[right] {\footnotesize $\ol{2}$};
\draw (8.5,3) node[right] {\footnotesize $\ol{1}$};
\path[-] (8.5,3) edge [bend right=20] (8.5,2);
\path[-] (7.5,3) edge [bend left=20] (7.5,1);
\draw (7.5,3) -- (8.5,1) -- (7.5,1);
\end{tikzpicture}
\end{center}
\end{eg}\textbf{}

We refer the reader to \cite{Jones-partition,BHP2} for some more examples of composing partition diagrams. (We note that some authors work with two rows of vertices rather than two columns and compose diagrams vertically rather than horizontally.)

\subsection{Subalgebras}

We now recall the subalgebras which will be our main focus.

\begin{defn}
 Fix a positive integer $r$. The \emph{Tanabe algebra}, $\mathcal{T}_n(\delta,r)$, is defined as the subalgebra of the partition algebra $P_n(\delta)$ spanned $k$-linearly by those partition $n$-diagrams such that, for each connected component, the difference between the number of vertices in the left and right columns is congruent to zero modulo $r$.    
\end{defn}

\begin{rem}
If we take $r=1$, we recover the partition algebra, $P_n(\delta)$. The algebras $\mathcal{T}_n(\delta,2)$ are sometimes called the \emph{even partition algebras} or the \emph{parity matching algebras} (see \cite{Scrimshaw} for instance). 
\end{rem}

\begin{defn}
The \emph{uniform block permutation algebra}, $U_n$, is the subalgebra of $P_n(\delta)$ spanned $k$-linearly by the partition $n$-diagrams such that each connected component has the same number of vertices in the left-hand column as it does in the right-hand column.   
\end{defn}

\begin{defn}
The \emph{totally propagating partition algebra}, $TPP_n$, is the subalgebra of $P_n(\delta)$ spanned $k$-linearly by the partition $n$-diagrams such that every connected component is propagating.        
\end{defn}

\begin{rem}
For uniform block permutation algebras and totally propagating partition algebras, we note that in the procedure for composing two $n$-diagrams $d_1$ and $d_2$, the diagram $d_1\ast d_2$ can have no connected components that lie entirely within the middle column so we drop the parameter $\delta$ from the notation.  
\end{rem}

\subsection{Augmentations}

Recall that a $k$-algebra is said to be \emph{augmented} if it comes equipped with a $k$-algebra map $\varepsilon\colon A \rightarrow k$, which is called the \emph{augmentation}.

Recall from \cite[Section 1]{BHP2} that the partition algebras $P_n(\delta)$ can be equipped with an augmentation that sends the permutation diagrams to $1 \in k$ and all non-permutation diagrams to $0\in k$ and that we define the trivial module $\1$ to be a copy of $k$ where $P_n(\delta)$ acts via the augmentation.

We see immediately that the Tanabe algebras, uniform block permutation algebras and totally propagating partition algebras are augmented by restricting the augmentation of $P_n(\delta)$ along the subalgebra inclusion maps. We can therefore define trivial modules for these three families of algebras similarly.

\section{Cohomology of algebras: proving Theorem \ref{theorem-B}}
\label{Theorem-B-section}

We recall the definitions of $k$-free idempotent left cover and the Mayer--Vietoris complex from \cite{Boyde2} and prove the cohomological analogue of Theorem 1.7 in loc.~cit.

\subsection{Idempotent left covers and the Mayer--Vietoris complex}

The material in this subsection comes from \cite[Sections 1 and 2]{Boyde2}. 

\begin{defn}
Let $A$ be a $k$-algebra. Let $I$ be a two-sided ideal of $A$. Let $w\geqslant h \geqslant 1$. An \emph{idempotent left cover of $I$ of height $h$ and width $w$} is a collection of left ideals $J_1,\dotsc , J_w$ in $A$ such that
\begin{itemize}
    \item $J_1+\cdots +J_w=I$;
    \item for $S\subset \ul{w}$ with $\llv S\rrv \leqslant h$, the intersection
    \[\bigcap_{i\in S} J_i\]
    is either zero or is a principal left ideal generated by an idempotent.
\end{itemize}
If $I$ is free as a $k$-module, then an idempotent left cover is said to be \emph{$k$-free} if there is a choice of $k$-basis for $I$ such that each $J_i$ is free on a subset of this basis.
\end{defn}

\begin{defn}
Let $A$ be a $k$-algebra. Let $I\subset A$ be a two-sided ideal. Let $J_1,\dotsc , J_w$ be an idempotent left cover of $I$. The \emph{Mayer--Vietoris complex associated to the idempotent left cover}, $C_{\star}$, is the chain complex of left $A$-modules defined as follows. We set
\[C_p= \underset{\llv S \rrv = p}{\bigoplus_{S \subset \ul{w}}}\bigcap_{i\in S} J_i\]
for $1\leqslant p \leqslant w$. We set $C_0=A$, $C_{-1}=A/I$ and so $C_n=0$ for $n>w$ and $n<-1$.

The differential $C_0\rightarrow C_{-1}$ is the projection map $A\rightarrow A/I$. The differential $C_1\rightarrow C_0$ is the direct sum of the inclusion of the left ideals $J_i\rightarrow A$. For $p\geqslant 2$, the differential $C_p\rightarrow C_{p-1}$ is defined on the summand $\cap_{i\in S} J_i$ by
\[x\mapsto \sum_{j\in S} (-1)^{\#(S,j)} i_{(S,j)}(x)  \]
where $\#(S,j)$ is the number of elements of $S$ that are less than $j$ and $i_{(S,j)}$ is the inclusion
\[\bigcap_{i\in S} J_i \rightarrow \bigcap_{i\in S\setminus \llb j\rrb} J_i.\]
\end{defn}

Recall that for a left $A$-module $M$, a \emph{partial projective resolution of length $h$} of $M$ by left $A$-modules is an exact sequence
\[P_h\rightarrow P_{h-1}\rightarrow \cdots \rightarrow P_0\rightarrow M \rightarrow 0\]
where each $P_i$ is a projective left $A$-module.

The Mayer--Vietoris complex satisfies the following important property \cite[Proposition 2.4]{Boyde2}.

\begin{prop}
\label{Boyde-prop}
Let $A$ be a $k$-algebra. Let $I$ be a two-sided ideal of $A$. Let $J_1,\dotsc , J_w$ be a $k$-free idempotent left cover of $I$ of height $h$.

The truncation, $C_{\star}^{\leqslant h}$ of the Mayer--Vietoris complex associated to the idempotent left cover is a length $h$ partial projective resolution of $A/I$ by left $A$-modules with $C_0=A$. The partial projective resolution has the additional property that $X\otimes_A C_p^{\leqslant h}=0$ for $p\geqslant 1$ for any right $A$-module $X$ on which $I$ acts as multiplication by $0\in k$. If $h=w$ then $C_{\star}^{\leqslant h}=C_{\star}$ is a projective resolution of $A/I$ by left $A$-modules. 
\fakeqed
\end{prop}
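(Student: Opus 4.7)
The plan is to prove the statement in three parts: projectivity of each $C_p$ for $0 \leqslant p \leqslant h$, exactness of the truncated sequence, and the tensor-vanishing property. I expect the main technical weight to lie in the exactness, where the $k$-freeness hypothesis is essential.

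First I would verify projectivity. The module $C_0 = A$ is free. For $1 \leqslant p \leqslant h$, each summand $\bigcap_{i \in S} J_i$ of $C_p$ with $\llv S \rrv = p \leqslant h$ is, by the height hypothesis, either zero or a principal left ideal $Ae$ generated by an idempotent $e$. Such an ideal is a direct summand of $A$ via $A = Ae \oplus A(1-e)$, hence projective, so $C_p$ is projective as a direct sum of projectives. The tensor-vanishing property then follows immediately: the same idempotent $e$ lies in $\bigcap_{i \in S} J_i \subseteq I$, so $X \otimes_A Ae \cong Xe = 0$ whenever $I$ acts trivially on $X$.

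The heart of the argument is exactness. The differential $C_0 \to C_{-1}$ is the projection $A \to A/I$, which is surjective with kernel $I$, and the image of $C_1 \to C_0$ is precisely $J_1 + \cdots + J_w = I$, giving exactness at $C_{-1}$ and $C_0$. For exactness at $C_p$ with $1 \leqslant p \leqslant w - 1$, I would use $k$-freeness to produce a splitting. Fix a $k$-basis $B$ of $I$ compatible with the cover, and for each $x \in B$ set $S_x = \llb i \in \ul{w} : x \in J_i \rrb$, which is non-empty because $I = \sum J_i$. Since intersections of free submodules spanned by subsets of a common basis are free on the intersection of those subsets, $\bigcap_{i \in S} J_i$ is $k$-free on $\llb x \in B : S \subseteq S_x \rrb$. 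Grouping the summands of each $C_p$ by the basis element $x$ they contain, and adjoining the $x$-line sitting in $I \subseteq C_0$, the augmented complex $\cdots \to C_p \to \cdots \to C_1 \to I \to 0$ splits as a direct sum, indexed by $x \in B$, of the augmented simplicial chain complex of the standard simplex on vertex set $S_x$. Because each such simplex is contractible, every summand is exact, giving the required exactness.

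The main obstacle I anticipate is the sign bookkeeping that matches the $x$-component of the Mayer-Vietoris differential $x \mapsto \sum_{j \in S} (-1)^{\#(S, j)} x$ with the standard simplicial boundary on the face labelled by $S$; once this identification is verified, the decomposition is an honest decomposition of chain complexes. The truncation $C_\star^{\leqslant h}$ then inherits the desired exactness in degrees $-1$ through $h-1$, and in the case $h = w$ no truncation is required and every $C_p$ is projective, so the full Mayer-Vietoris complex is a projective resolution of $A/I$.
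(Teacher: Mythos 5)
Your proof is correct. The paper does not prove this proposition itself (it is quoted as Proposition 2.5 of Boyde's paper), but your argument --- projectivity and the tensor-vanishing property from the idempotent generators $e\in\bigcap_{i\in S}J_i\subseteq I$, and exactness via the $k$-free splitting of the augmented complex into a direct sum, over basis elements $x$, of augmented simplicial chain complexes of the simplex on the non-empty set $S_x$ --- is essentially the argument of the cited source.
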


\subsection{A stable isomorphism on cohomology}

We now prove the proposition from which we can deduce Theorem \ref{theorem-B}.

\begin{prop}
\label{partition-technical-prop}
Let $M$ and $N$ be right $A$-modules. Let $I$ be a two-sided ideal that acts as multiplication by $0\in k$ on $M$ and $N$.

Suppose that there exists a partial projective resolution of length $h$, $C_{\star}^{\leqslant h}$, of $A/I$ by left $A$-modules such that $C_0^{\leqslant h}=A$ and such that $X\otimes_A C_p^{\leqslant h}=0$ for $p\geqslant 1$ for any right $A$-module $X$ on which $I$ acts as multiplication by $0\in k$.

There is a natural isomorphism of $k$-modules
\[\Ext_A^q(M,N) \cong \Ext_{A/I}^q(M,N)\]
for $q\leqslant h$. Furthermore, the natural map
\[\Ext_{A/I}^{h+1}(M,N)\rightarrow \Ext_{A}^{h+1}(M,N)\]
is an injection.
\end{prop}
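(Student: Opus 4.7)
The plan is to dualise the tensor-based argument that underlies the homological version \cite[Theorem 1.7]{Boyde2}. The starting observation is that the tensor-vanishing hypothesis upgrades to a Hom-vanishing statement. Specialising $X = A/I$ in the hypothesis gives $C_p^{\leqslant h}/IC_p^{\leqslant h} = 0$, that is, $IC_p^{\leqslant h} = C_p^{\leqslant h}$ for every $p \geqslant 1$. Consequently, for any $A$-module $Y$ annihilated by $I$ and any $A$-linear map $f \colon C_p^{\leqslant h} \to Y$, writing $c \in C_p^{\leqslant h}$ as $\sum_j a_j c_j$ with $a_j \in I$ forces $f(c) = \sum_j a_j f(c_j) \in IY = 0$. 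Hence $\Hom_A(C_p^{\leqslant h}, Y) = 0$ for every $p \geqslant 1$ whenever $IY = 0$.

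Next, I would extend $C_\star^{\leqslant h}$ arbitrarily to a full projective $A$-resolution of $A/I$ and apply $\Hom_A(-, N)$: the resulting cochain complex has entries $N, 0, 0, \ldots, 0, \Hom_A(C_{h+1}, N), \ldots$, with zeros throughout positions $1, \ldots, h$. Reading off cohomology gives $\Ext^0_A(A/I, N) = N$ and $\Ext^q_A(A/I, N) = 0$ for $1 \leqslant q \leqslant h$; the case $q = h$ works because $\Ext^h_A(A/I, N)$ is a subquotient of $\Hom_A(C_h, N) = 0$. This vanishing extends verbatim to any projective $A/I$-module in place of $A/I$, since such a module is a direct summand of a free $A/I$-module and $\Ext^q_A(-, N)$ sends direct sums to direct products.

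The conclusion then follows by a standard dimension-shift, or equivalently by the Cartan-Eilenberg change-of-rings spectral sequence
\[
E_2^{p,q} = \Ext^p_{A/I}(M, \Ext^q_A(A/I, N)) \Longrightarrow \Ext^{p+q}_A(M, N),
\]
which collapses onto the row $q = 0$ in total degree $\leqslant h$ and yields $\Ext^n_A(M, N) \cong E_2^{n, 0} = \Ext^n_{A/I}(M, N)$ for $n \leqslant h$, naturally in $M$ and $N$. Concretely, pick a projective resolution $P_\star \to M$ over $A/I$, split it into short exact sequences $0 \to K_i \to P_i \to K_{i-1} \to 0$, and use the long exact sequences in $\Ext^\star_A(-, N)$ together with $\Hom_A(P_i, N) = \Hom_{A/I}(P_i, N)$ and the $\Ext^q$-vanishing of the $P_i$ to identify $\Ext^q_A(M, N)$ inductively with $\Ext^q_{A/I}(M, N)$ for $q \leqslant h$. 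The main obstacle is really just the Hom-vanishing in the first paragraph; once that is in hand, the remaining steps are dual to the familiar Tor calculation.
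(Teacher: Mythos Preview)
Your argument has a left/right module mismatch that creates a genuine gap. The hypothesis hands you a partial projective resolution $C_{\star}^{\leqslant h}$ of $A/I$ by \emph{left} $A$-modules, while $M$ and $N$ are \emph{right} $A$-modules. Your first paragraph correctly deduces $IC_p^{\leqslant h}=C_p^{\leqslant h}$ and hence $\Hom_A(C_p^{\leqslant h},Y)=0$ for any \emph{left} $A$-module $Y$ with $IY=0$. But in the second paragraph you apply $\Hom_A(-,N)$ to the extended $C_{\star}$ with $N$ a right module: this $\Hom$ is not defined (and reinterpreting via $A^{\mathrm{op}}$ does not help, because the resolution then sits on the wrong side). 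Consequently you have not established the vanishing of $\Ext^q_A(A/I,N)$ in the category of \emph{right} $A$-modules, which is exactly what your change-of-rings spectral sequence and your dimension-shift require. The claim $\Ext^q_A(A/I,N)=0$ for $1\leqslant q\leqslant h$ is in fact true, but your proof of it does not go through as written.

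The paper sidesteps the issue by never placing $C_{\star}^{\leqslant h}$ inside a $\Hom$. It takes a free resolution $F_{\star}$ of $M$ by right $A$-modules and uses the adjunction
\[
\Hom_A(F_{\star},N)\;\cong\;\Hom_{A/I}\bigl(F_{\star}\otimes_A(A/I),\,N\bigr),
\]
valid because $I$ annihilates $N$. The left-module complex $C_{\star}^{\leqslant h}$ then enters only through the tensor product $F_{\star}\otimes_A C_{\star}^{\leqslant h}$: this is the step imported from \cite[Theorem 2.7]{Boyde2}, and it shows that $F_{\star}^{\leqslant h+1}\otimes_A(A/I)$ is a length $h{+}1$ partial projective resolution of $M$ by right $A/I$-modules. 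Hence the same cochain complex computes both $\Ext^q_A(M,N)$ and $\Ext^q_{A/I}(M,N)$ for $q\leqslant h$. If you want to keep a spectral-sequence packaging, the version compatible with the given handedness is
\[
E_2^{p,q}=\Ext^p_{A/I}\bigl(\Tor^A_q(M,A/I),\,N\bigr)\;\Longrightarrow\;\Ext^{p+q}_A(M,N),
\]
which collapses in the desired range because $\Tor^A_q(M,A/I)=0$ for $1\leqslant q\leqslant h$; your $\Ext^q_A(A/I,N)$-variant is the one that needs a resolution on the other side.
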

\begin{proof}
We follow a similar argument to \cite[Theorem 2.7]{Boyde2} throughout. When $q\leqslant h$, we will show that $\Ext_A^q(M,N)$ and $\Ext_{A/I}^q(M,N)$ are the cohomology of the same cochain complex.

Let $F_{\star}$ be a free resolution of $M$ by right $A$-modules. We know that $\Ext_A^{\star}(M,N)$ is the cohomology of the cochain complex $\Hom_A(F_{\star},N)$.

Since $I$ acts as multiplication by $0\in k$ on $N$, we have an isomorphism of cochain complexes
\[\Hom_A(F_{\star},N)\cong \Hom_{A/I}\left(F_{\star}\otimes_A (A/I) , N\right)\]
by extension and restriction of scalars.

We observe that since each $F_{i}$ is free as an $A$-module, each $F_{i}\otimes_A (A/I)$ is free as an $A/I$-module.

In order to deduce the isomorphisms for $q\leqslant h$, it suffices to show that the homology of $F_{\star}\otimes_A (A/I)$ is isomorphic to $M$ in degree zero and $0$ in degrees $0< q\leqslant h$. 

By assumption, $C_{\star}^{\leqslant h}$ is a partial projective resolution of $A/I$ by left $A$-modules with the property that $X\otimes_A C_p^{\leqslant h}=0$ for $p\geqslant 1$ for any right $A$-module $X$ on which $I$ acts as multiplication by $0\in k$. Furthermore, $C_0^{\leqslant h}=A$. 

Therefore, by \cite[Proposition 2.6]{Boyde2} for example, we have 
\[H_{q}(F_{\star}\otimes_A (A/I))=\Tor_q^A(M,A/I)=0\]
for $0<q\leqslant h$ and
\[H_0(F_{\star}\otimes_A (A/I))=\Tor_0^A(M,A/I)=M\otimes_A (A/I)=M\otimes_A C_0^{\leqslant h}= M\otimes_A A \cong M.\]

This yields the necessary isomorphisms for $q\leqslant h$.

We now turn our attention to the injection $\Ext_{A/I}^{h+1}(M,N)\rightarrow \Ext_{A}^{h+1}(M,N)$. 

Consider our chain complex $F_{\star}\otimes_A (A/I)$. We can take the direct sum of $F_{h+2}\otimes_A (A/I)$ with a free right $A/I$-module $X$ such that
\[(F_{h+2}\otimes_A (A/I))\oplus X \rightarrow F_{h+1}\otimes_A (A/I)\rightarrow F_h\otimes_A (A/I)\]
is exact (by killing off the kernel of the second map). Call this new complex $(F_{\star}\otimes_A (A/I))\oplus X_{(h+2)}$.

By construction, the homology of this new complex is isomorphic to $M$ in degree zero and is $0$ in degrees $0<q\leqslant h+1$.

Furthermore, we have a short exact sequence of chain complexes of right $(A/I)$-modules
\[0\rightarrow F_{\star}\otimes_A (A/I) \rightarrow (F_{\star}\otimes_A (A/I))\oplus X_{(h+2)} \rightarrow X_{(h+2)} \rightarrow 0,\]
where $X_{(h+2)}$ is the complex given by $X$ concentrated in degree $h+2$.

Applying $\Hom_{A/I}(-,N)$ to this short exact sequence and taking the long exact sequence in cohomology we recover the isomorphisms in degrees $q\leqslant h$. Furthermore,
\begin{itemize}
    \item the cohomology of $X_{(h+2)}$ is zero in degree $h+1$ and is $X$ in degree $h+2$,
    \item the cohomology group in degree $h+1$ of \[\Hom_{A/I}((F_{\star}\otimes_A (A/I))\oplus X_{(h+2)},N)\] is $\Ext_{A/I}^{h+1}(M,N)$ and
    \item the cohomology group in degree $h+1$ of $\Hom_{A/I}(F_{\star}\otimes_A (A/I),N)$ is $\Ext_{A}^{h+1}(M,N)$ by the isomorphism of chain complexes given by the extension and restriction of scalars.
\end{itemize}
Thus the long exact sequence in cohomology yields an exact sequence
\[0\rightarrow \Ext_{A/I}^{h+1}(M,N) \rightarrow \Ext_{A}^{h+1}(M,N) \rightarrow X,\]
from which it follows that $\Ext_{A/I}^{h+1}(M,N) \rightarrow \Ext_{A}^{h+1}(M,N)$ is an injection.
\end{proof}

\begin{proof}[Proof of Theorem \ref{theorem-B}]
Theorem \ref{theorem-B} follows from Proposition \ref{partition-technical-prop} and Proposition \ref{Boyde-prop} by taking $M=N=\1$.   
\end{proof}

\section{Cohomology of partition algebras and Jones annular algebras}
\label{Cohom-partition-sec}

In this section we show that for any $\delta \in k$, the cohomology of the partition algebras is stably isomorphic to the group cohomology of the symmetric groups using Theorem \ref{theorem-B}. As a corollary, we deduce that the partition algebras exhibit cohomological stability. Furthermore we show that if $\delta$ is invertible, then the cohomology of the partition algebras is globally isomorphic to the group cohomology of the symmetric groups. We also use Theorem \ref{theorem-B} to deduce the cohomological version of Boyde's result on the Jones annular algebras.

\subsection{Partition algebras}
We begin by showing that the cohomology of the partition algebras is stably isomorphic to the cohomology of the symmetric groups.

Let $I_{n-1}$ be the two-sided ideal of $P_n(\delta)$ spanned $k$-linearly by the non-permutation diagrams. One can see that this is a two-sided ideal by checking that the composite of two diagrams with $i$ propagating components and $j$ propagating components respectively is a scalar multiple of a diagram with at most $\min(i,j)$ propagating components.

\begin{defn}
 For $i\in \ul{n}$, let $K_i$ denote the left ideal in $P_n(\delta)$ spanned $k$-linearly by the diagrams where the vertex $\ol{i}$ is an isolated vertex. For distinct $i$ and $j$ in $\ul{n}$ with $i<j$, we let $L_{i,j}$ denote the left ideal in $P_n(\delta)$ spanned $k$-linearly by the diagrams where $\ol{i}$ and $\ol{j}$ are in the same connected component.  
\end{defn}

\begin{thm}
\label{partition-cohom-stability-thm}
There is a natural isomorphism of $k$-modules
\[\Ext_{P_n(\delta)}^q(\mathbbm{1},\mathbbm{1}) \cong H^{q}(\Sigma_n ,\mathbbm{1})\]
for $q\leqslant n-1$.
\end{thm}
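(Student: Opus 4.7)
The plan is to apply Proposition \ref{partition-prop} directly with $A = P_n(\delta)$, $I = I_{n-1}$, and $M = N = \1$, and then identify the quotient $A/I$ with the group algebra of the symmetric group. The three hypotheses of the proposition to check are that $I_{n-1}$ is free as a $k$-module, that it acts trivially on $\1$, and that it admits a $k$-free idempotent left cover of height $h = n-1$. The first point is immediate: by construction $I_{n-1}$ is spanned $k$-linearly by the non-permutation partition $n$-diagrams, which form part of the standard $k$-basis of $P_n(\delta)$. The second follows because the augmentation sends every non-permutation diagram to $0 \in k$, so $I_{n-1}$ acts as multiplication by $0$ on $\1$.

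For the third hypothesis, I would invoke the $k$-free idempotent left cover of $I_{n-1}$ constructed by Boyde in \cite{Boyde2}, which was used to prove the homological version \cite[Theorem 1.1]{Boyde2} and has height $n-1$. The key point is that this cover is an algebraic datum attached to the pair $(P_n(\delta), I_{n-1})$ and does not depend on any variance choices, so it transports verbatim from the $\Tor$ setting to the $\Ext$ setting once we have Proposition \ref{partition-technical-prop} in hand.

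The next step is to identify $P_n(\delta)/I_{n-1}$ with $k[\Sigma_n]$. The permutation $n$-diagrams form a $k$-basis of a complement to $I_{n-1}$. When two permutation diagrams are composed in $P_n(\delta)$, each propagating component of either factor consists of a single horizontal edge, so no closed component can appear in the middle column of $d_1 \ast d_2$; the scalar $\delta^{\alpha}$ contributes $\delta^{0} = 1$ and the resulting diagram again has exactly $n$ propagating components. Hence the permutation diagrams form a submonoid of $P_n(\delta)$ isomorphic to $\Sigma_n$ and the induced map $k[\Sigma_n] \rightarrow P_n(\delta)/I_{n-1}$ is an isomorphism of $k$-algebras.

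Assembling these, Proposition \ref{partition-prop} supplies a natural isomorphism
\[\Ext_{P_n(\delta)}^q(\1, \1) \cong \Ext_{k[\Sigma_n]}^q(\1, \1) = H^q(\Sigma_n, \1)\]
for $q \leqslant n-1$, which is the desired conclusion. There is essentially no obstacle at this stage, since the heavy machinery lives in Boyde's idempotent left cover and in Proposition \ref{partition-technical-prop}; the only thing to do carefully is the identification of the quotient with $k[\Sigma_n]$, where one must check that $\delta$ genuinely drops out of the composition of two permutation diagrams, but this is forced by the cardinality-two structure of propagating components of permutation diagrams.
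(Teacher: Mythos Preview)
Your proposal is correct and follows essentially the same approach as the paper: apply Proposition \ref{partition-prop} with $A = P_n(\delta)$ and $I = I_{n-1}$, invoke Boyde's $k$-free idempotent left cover of $I_{n-1}$ of height $n-1$ from \cite{Boyde2}, and identify the quotient with $k[\Sigma_n]$. The paper additionally spells out the constituent left ideals $K_i$ and $L_{i,j}$ of Boyde's cover, but the argument is otherwise identical.
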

\begin{proof}
There is an isomorphism of $k$-algebras $P_n(\delta)/I_{n-1} \cong k[\Sigma_n]$. 

Boyde (\cite[Section 3]{Boyde2}) shows that the left ideals $K_i$ and $L_{i,j}$ form a $k$-free idempotent left cover of $I_{n-1}$ of height $n-1$. The theorem now follows from Theorem \ref{theorem-B}.
\end{proof}

As a corollary, we can deduce the cohomological analogue of \cite[Corollary C]{BHP2}.

\begin{cor}
\label{Partition-stability-cor}
 The inclusion map $P_{n-1}(\delta) \rightarrow P_n(\delta)$ induces a map on cohomology
 \[\Ext_{P_n(\delta)}^q(\1 ,\1) \rightarrow \Ext_{P_{n-1}(\delta)}^q(\1 ,\1)\]
 which is an isomorphism in degrees $n\geqslant 2q+1$, and this stable range is sharp. Furthermore,
 \[\underset{n\rightarrow \infty}{\lim}\, \Ext_{P_n(\delta)}^{\star}(\1 ,\1) \cong \underset{n\rightarrow \infty}{\lim}\, H^{\star}\left(\Sigma_n , \1\right).\]
\end{cor}
\begin{proof}
This follows from Theorem \ref{partition-cohom-stability-thm} and Nakaoka's result on the cohomological stability of the symmetric groups \cite{Nakaoka}.    
\end{proof}

We now turn our attention to the case where $\delta$ is invertible.
\begin{lem}
\label{delta-invertible-lem}
When $\delta$ is invertible in $k$, Boyde's $k$-free idempotent cover of $I_{n-1}\subset P_n(\delta)$ has height equal to its width.
\end{lem}
\begin{proof}
As noted, Boyde shows that the left ideals $K_i$ and $L_{i,j}$ form a $k$-free idempotent left cover of $I_{n-1}$ of height $n-1$. In fact, the only non-zero intersection of these ideals which is not, in general, principal and generated by an idempotent is the intersection
\[J=\bigcap_{i=1}^n K_i.\]
This ideal is spanned $k$-linearly by all partition $n$-diagrams such that every vertex in the right-hand column is isolated.

Let $e$ be the partition $n$-diagram such that all $2n$ vertices are isolated. Since $\delta$ is invertible we may consider $\delta^{-n}e\in P_n(\delta)$. It is immediate that this is idempotent since $e^2=\delta^n e$. Furthermore, right multiplication by $\delta^{-n}e$ yields a map $P_n(\delta)\rightarrow J$ since, on basis diagrams, right multiplication by $e$ yields diagrams whose right-hand column consists of isolated vertices. This map is the identity when restricted to $J$. Let $d\in J$ be a basis diagram, so the right-hand column consists only of isolated vertices. Then  
\[d\cdot (\delta^{-n}e)=\delta^{-n}(d\cdot e)= \delta^{-n}(\delta^n d)=d.\]
In other words, when $\delta$ is invertible, $J$ is principal and generated by idempotent.
\end{proof}

\begin{cor}
Suppose $\delta \in k$ is invertible. There exist isomorphisms of graded $k$-modules
\[\Tor_{\star}^{P_n(\delta)}\left(\1 , \1\right) \cong H_{\star}\left(\Sigma_n , \1\right)\quad \text{and} \quad \Ext_{P_n(\delta)}^{\star}\left(\1 , \1\right) \cong H^{\star}\left(\Sigma_n , \1\right).\]
\end{cor}
\begin{proof}
Lemma \ref{delta-invertible-lem} tells us that when $\delta$ is invertible, the ideals $K_i$ and $L_{i,j}$ form a $k$-free idempotent cover of $I_{n-1}$ with height equal to the width. The isomorphisms of graded $k$-modules now follow from Theorem \ref{theorem-B}.
\end{proof}

\begin{rem}
    We note that the homological statement recovers \cite[Theorem A]{BHP2}.
\end{rem}

\subsection{Jones annular algebras}
Boyde uses his theorem to prove results about the homology of \emph{Jones annular algebras}, $J_n(\delta)$. The Jones annular algebras can be defined as the $k$-linear span of partition $n$-diagrams such that each connected component has size two and the diagram can be represented as a planar graph on an annulus (see \cite{Jones-annular-1, GL-cellular, Boyde2} for precise definitions). 

The Jones annular algebras are augmented by sending the diagrams that represent cyclic permutations to $1\in k$ and all other diagrams to $0\in k$. In particular, recalling the two-sided ideal $I_{n-1}\subset P_n(\delta)$, we have an isomorphism of $k$-algebras $J_n(\delta)/(J_n(\delta)\cap I_{n-1})\cong k[C_n]$, where $C_n$ is the cyclic group of order $n$.

\begin{thm}
\label{Jones-ann-alg-thm}
Let $\delta \in k$. Let $C_n$ denote the cyclic group of order $n$. There is a natural isomorphism 
\[\Ext_{J_n(\delta)}^{q}\left(\1 , \1\right) \cong  H^{q}(C_n,\1)\]
for $q\leqslant \frac{n}{2}-1$. 

Furthermore, if $n$ is odd or if $\delta$ is invertible, the isomorphism holds for all $q$.
\end{thm}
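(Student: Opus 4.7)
The plan is to apply Proposition \ref{partition-prop} to $A = J_n(\delta)$ with $I = I_{n-1}$, the two-sided ideal spanned $k$-linearly by the non-permutation diagrams. Since the Jones annular augmentation sends each non-permutation diagram to $0\in k$, the ideal $I_{n-1}$ acts as multiplication by $0$ on $\1$, and it is plainly $k$-free on the non-permutation diagrams. Hence the preliminary hypotheses of Proposition \ref{partition-prop} are satisfied, and all that remains is to supply a $k$-free idempotent left cover of $I_{n-1}$ and to identify the quotient $A/I_{n-1}$.

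First I would identify the quotient. A permutation diagram in $J_n(\delta)$ is an annular diagram having $n$ propagating components and no non-propagating edges; each such connected component is forced to have size two, so these diagrams biject with the cyclic permutations of $\ul{n}$. The projection $J_n(\delta)\to J_n(\delta)/I_{n-1}$ therefore induces an isomorphism of augmented $k$-algebras $J_n(\delta)/I_{n-1} \cong k[C_n]$, and under this identification the trivial $J_n(\delta)$-module corresponds to the trivial $k[C_n]$-module. Consequently
\[\Ext_{J_n(\delta)/I_{n-1}}^{q}(\1 , \1) \cong H^{q}(C_n , \1).\]

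Next I would invoke the $k$-free idempotent left cover of $I_{n-1}$ in $J_n(\delta)$ constructed in \cite[Section 4]{Boyde2}, whose height is exactly what produces the bound $q\leqslant n/2 - 1$; Proposition \ref{partition-prop} applied to this cover immediately gives the range statement. For the improved versions, I would follow the analysis in loc.\ cit.: when $n$ is odd the cover already satisfies $h=w$, while when $\delta$ is invertible the cover may be enlarged by an additional idempotent so that once again $h=w$. In either case the Mayer-Vietoris complex becomes a genuine projective resolution of $J_n(\delta)/I_{n-1}$ by the last clause of Proposition \ref{Boyde-prop}, and Proposition \ref{partition-prop} then delivers the cohomological isomorphism for all $q$.

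The main obstacle is not conceptual: the combinatorial heart of the argument is packaged into Boyde's idempotent cover construction together with Proposition \ref{partition-prop}, and the passage from $\Tor$ to $\Ext$ is black-boxed by Proposition \ref{partition-technical-prop}. The one genuinely new step is the algebra-level identification $J_n(\delta)/I_{n-1} \cong k[C_n]$, which is routine given the description of permutation diagrams in the Jones annular algebra and the definition of the augmentation.
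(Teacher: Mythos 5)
Your proposal is correct and follows essentially the same route as the paper: the paper likewise deduces Theorem \ref{Jones-ann-alg-thm} by applying Proposition \ref{partition-prop} to the two-sided ideal spanned by the non-permutation diagrams, using the $k$-free idempotent left cover from \cite[Section 4]{Boyde2} (whose height gives the range $q\leqslant \frac{n}{2}-1$, and which has height equal to width when $n$ is odd or $\delta$ is invertible), together with the identification of the quotient with $k[C_n]$. Your added explanation of why the permutation diagrams in $J_n(\delta)$ are exactly the cyclic permutations is a detail the paper leaves implicit but is consistent with its description of the augmentation.
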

\begin{proof}
For $1\leqslant i \leqslant n-1$, let $\mathcal{J}_i$ denote the left ideal of $J_n(\delta)$ spanned $k$-linearly by basis diagrams such that the vertices $\ol{i}$ and $\ol{i+1}$ are connected. Let $\mathcal{J}_n$ denote the left ideal of $J_n(\delta)$ spanned $k$-linearly by basis diagrams such that the vertices $\ol{n}$ and $\ol{1}$ are connected. Boyde \cite[Section 4]{Boyde2} shows that the ideals $\mathcal{J}_i$ for $1\leqslant i \leqslant n$ form a $k$-free idempotent left cover of the two-sided ideal $J_n(\delta)\cap I_{n-1}\subset J_n(\delta)$ with height $\frac{n}{2}-1$. Boyde also shows that when $\delta$ is invertible or when $n$ is odd, the cover has height equal to the width. Combining Boyde's work with Theorem \ref{theorem-B} yields the result.
\end{proof}

\section{(Co)homology of Tanabe algebras, totally propagating partition algebras and the uniform block algebras}
\label{Tanabe-sec}

Recall that $I_{n-1}\subset P_n(\delta)$ is the two-sided ideal spanned $k$-linearly by the non-permutation diagrams.

Furthermore, recall that $L_{i,j}\subset P_n(\delta)$ is the left ideal spanned $k$-linearly by partition $n$-diagrams such that $\ol{i}$ and $\ol{j}$ are in the same connected component.

\begin{defn}
 Let $\nu_{a,b}\in P_n(\delta)$ be the partition $n$-diagram whose connected components are $\lbrace a , b ,\ol{a} , \ol{b}\rbrace$ and $\lbrace i, \ol{i}\rbrace$ for $i\in \ul{n}\setminus \llb a , b\rrb$.  
\end{defn}

\begin{lem}
\label{idempotent-cover-A-lem}
 Let $A$ be a subalgebra of $P_n(\delta)$ such that
 \begin{itemize}
    \item $A$ is spanned $k$-linearly by a basis of diagrams,
     \item $A$ contains no diagrams with isolated vertices, and
     \item $A$ contains all the elements of the form $\nu_{a,b}$.
 \end{itemize}
 Then the left ideals $A\cap L_{i,j}$ form a $k$-free idempotent cover of the two-sided ideal $A\cap I_{n-1}$ whose height is equal to its width.
\end{lem}
\begin{proof}
We begin by showing that the left ideals $A\cap L_{i,j}$ cover the two-sided ideal $A\cap I_{n-1}$. 

If a basis diagram lies in $A\cap L_{i,j}$, it contains a connected component with at least two vertices in the right-hand column and so can have at most $n-1$ propagating components. Therefore, the basis diagram lies in $A\cap I_{n-1}$. Conversely, a basis diagram in $A\cap I_{n-1}$ contains at most $n-1$ propagating components. Since we cannot have isolated vertices, this means that at least two vertices in the right-hand column must be in the same connected component. Hence the diagram must lie in some $A\cap L_{i,j}$.

We now show that any intersection of ideals $A\cap L_{i,j}$ must be zero or principal and generated by an idempotent. We break this up into parts.
\begin{enumerate}
\item Let $\ul{n}_{<}^2$ be the set of indices $(i,j)$ with $1\leqslant i< j \leqslant n$. Let $T\subset \ul{n}_{<}^2$. Let
\[J= \bigcap_{(i,j)\in T} A\cap L_{i,j}.\]

We claim that $J\cdot \nu_{a,b} \subset (A\cap L_{a,b}) \cap J$. We begin by noting that all diagrams of the form $\nu_{a,b}$ are in $A$ by assumption.
\item Let $\rho$ be a basis diagram in $J$. We must show that $\rho \nu_{a,b} \in (A\cap L_{a,b}) \cap J$. Since $\rho \in J$, for each $(i,j)\in T$, the vertices $\ol{i}$ and $\ol{j}$ are connected in $\rho$. The vertices $i$ and $\ol{i}$ are connected in $\nu_{a,b}$, as are $j$ and $\ol{j}$. Therefore, we see that $\ol{i}$ and $\ol{j}$ are connected in the composite $\rho\nu_{a,b}$ so $\rho\nu_{a,b} \in J$. Since $A\cap L_{a,b}$ is a left ideal and $\nu_{a,b}\in A\cap L_{a,b}$ by assumption, the composite $\rho\nu_{a,b} \in A \cap L_{a,b}$.
\item We observe that right multiplication by $\nu_{a,b}$ gives a retraction of the inclusion map $(A\cap L_{a,b}) \cap J \rightarrow J$. Right multiplication by $\nu_{a,b}$ acts on an $n$-diagram, $d$, by merging the component of $d$ containing $\ol{a}$ with the component containing $\ol{b}$, whilst preserving all other connected components of $d$. For any $\rho \in (A \cap L_{a,b})$, $\ol{a}$ and $\ol{b}$ already lie in the same connected component and so $\rho \nu_{a,b} = \rho$.
\item By repeatedly applying the argument of the previous two points, we see that the composite
\[J\hookrightarrow A \xrightarrow{\Pi} J,\]
where $\Pi$ is right multiplication by the product of all $\nu_{i,j}$ for $(i,j)\in T$, is the identity map. Since $J$ is a left $A$-module retract of $A$ itself, it then follows that $J$ 
is principal and generated by an idempotent
\end{enumerate}
Therefore, the family of left ideals $A \cap L_{i,j}$ forms an idempotent left cover of $A \cap I_{n-1}$ whose height is equal to its width.
\end{proof}

We now prove Theorem \ref{Tanabe-thm}.

\begin{proof}[Proof of Theorem \ref{Tanabe-thm}]
 We observe that we have isomorphisms of $k$-algebras 
 \begin{itemize}
     \item $\mathcal{T}_n(\delta,r)/(\mathcal{T}_n(\delta,r)\cap I_{n-1}) \cong k[\Sigma_n]$;
     \item $U_n/(U_n\cap I_{n-1}) \cong k[\Sigma_n]$
     and 
     \item $TPP_n/(TPP_n\cap I_{n-1}) \cong k[\Sigma_n]$.
\end{itemize}
By definition, the basis diagrams in $\mathcal{T}_n(\delta,r)$, $TPP_n$ and $U_n$ cannot contain any isolated vertices. We also note that all three algebras contain all the elements of the form $\nu_{a,b}$, since all connected components of each $\nu_{a,b}$ are propagating, with an equal number of vertices in each column. Theorem \ref{Tanabe-thm} now follows from Lemma \ref{idempotent-cover-A-lem} and Theorem \ref{theorem-B}.
 \end{proof}

\bibliographystyle{alpha}
\bibliography{tanabe-refs}

\end{document}